%%
%% This is LaTeX2e input.
%%

%% The following tells LaTeX that we are using the 
%% style file amsart.cls (That is the AMS article style
%%
\documentclass{amsart}

%% This has a default type size 10pt.  Other options are 11pt and 12pt
%% This are set by replacing the command above by
%% \documentclass[11pt]{amsart}. This package already loads amsmath, amsthm and amsfonts

%%
%% Some mathematical symbols are not included in the basic LaTeX
%% package.  Uncommenting the following makes more commands
%% available.  
%%
%
\usepackage{amssymb}
%
%%
% Author addresses
% \usepackage{amsaddr}
% 
%% Fonts and language
%
\usepackage[utf8]{inputenc}
\usepackage[T1]{fontenc}
\usepackage[english]{babel}
%
%
%% Improved versions of \newcommand-like comands 
\usepackage{xargs}
%% The following packages deal with graphics 
%
% \usepackage{graphicx, color}
% \usepackage[figurewithin=none]{caption,subcaption} %option for change numering of figures
%
%
%
%% Tabular and lists options
%
% \usepackage{array, multirow}
\usepackage{enumitem}
%
%% Page layout
%
%\usepackage[]{geometry}
%
%
%% Citation and ref
%
%\usepackage[notcite,notref]{showkeys} %Printing labels
\usepackage{hyperref}
\hypersetup{
	hidelinks,
  colorlinks  = true,    % Colours links instead of ugly boxes
  urlcolor    = blue,    % Colour for external hyperlinks
  linkcolor   = blue,    % Colour of internal links
  citecolor   = blue,      % Colour of citations
  pdfauthor   = {Antonio Montero and Asia Weiss},%
  pdfsubject  = {Regular hypertopes},%
  pdftitle    = {Locally spherical hypertopes from generalized cubes},  %
}

\usepackage[capitalise,noabbrev, nameinlink]{cleveref}
% \Crefname{part}{Part}{Parts}%Example of definition
%
%
%% Diagrams
\usepackage{tikz-cd}
%%%%
%
%
%
%% Todonotes
%\usepackage[shadow, colorinlistoftodos,textsize=tiny]{todonotes}
%\setuptodonotes{fancyline, color=blue!30}
%\newcommand{\missingref}[1][?]{\todo[color=red!30]{Reference (#1)}}

%%Other examples of todonotes %requires xargs
% \newcommandx{\unsure}[2][1=]{\todo[linecolor=red,backgroundcolor=red!25,bordercolor=red,#1]{#2}}
% \newcommandx{\change}[2][1=]{\todo[linecolor=blue,backgroundcolor=blue!25,bordercolor=blue,#1]{#2}}
% \newcommandx{\info}[2][1=]{\todo[linecolor=OliveGreen,backgroundcolor=OliveGreen!25,bordercolor=OliveGreen,#1]{#2}}
% \newcommandx{\improvement}[2][1=]{\todo[linecolor=Plum,backgroundcolor=Plum!25,bordercolor=Plum,#1]{#2}}
% \newcommandx{\thiswillnotshow}[2][1=]{\todo[disable,#1]{#2}}

%%Highlited text todos
%\usepackage{soul}
% \newcommandx{\hltodo}[2][1]{\texthl{#1}\todo{#2}}

%Authored todos
%\newcommand{\asia}[1]{\todo[color=blue!25]{\textbf{Asia:}#1}}
%\newcommandx{\asiaHL}[2][2=Check]{\texthl{#1} \asia{#2}}
%\newcommand{\tero}[1]{\todo[color=green!25]{\textbf{Tero:} #1}}
%\newcommandx{\teroHL}[2][2=Check]{\texthl{#1} \tero{#2}}

% \makeatletter
%     \providecommand\@dotsep{5}
% \makeatother

%%% Linenumbers
% \usepackage[mathlines, pagewise, switch*, modulo]{lineno}
% \linenumbers

%%%%
%%%%%%%%%%%%%%%%%%%%%%%%%%%%%%%%%%%%%%%%%%%%%%%%%%%%%%%%%%%%%%%%%%%%%
% \graphicspath{{./img/}} %Uso: \graphicspath{{RelativePath}}

%************************ Theorems ********************%
\theoremstyle{plain}
\newtheorem{thm}{Theorem}[section]
\newtheorem{theorem}[thm]{Theorem}
\newtheorem{lemma}[thm]{Lemma}

\newtheorem{proposition}[thm]{Proposition}

\newtheorem{corollary}[thm]{Corollary}

\theoremstyle{definition}

\theoremstyle{remark}

\newtheorem{rem}[thm]{Remark}
%%% 

\numberwithin{equation}{section}

%%
%% 
%%%%%%%%%%%%%%%%%%%%%%%%%%%%%%%%%%%%%%%%%%%%%%%%%%%%%%%%%%%%%%%%%%%%%
%Personalized commands
\renewcommand{\leq}{\leqslant} %
\renewcommand{\geq}{\geqslant}
\renewcommand{\epsilon}{\varepsilon} %
\renewcommand{\subset}{\subseteq} %

\renewcommand{\{}{\lbrace}
\renewcommand{\}}{\rbrace}
\newcommand{\sm}{\setminus} %setminus corto

%\renewcommand{\bar}{\overline}
%\renewcommand{\hat}{\widehat}

 %cuadrito decente

%\DeclareMathOperator{\Aut}{Aut} %example

%\newenvironment{name}{begindef}{endef}

%% This is the end of the preamble.

%letters
\newcommand{\cP}{\mathcal{P}}
\newcommand{\cF}{\mathcal{F}}

\newcommand{\cK}{\mathcal{K}}
\newcommand{\cH}{\mathcal{H}}

\newcommand{\bZ}{\mathbb{Z}}
\newcommand{\bN}{\mathbb{N}}

%vectors
\newcommand{\cyvec}[1]{\bar{\mathrm{#1}}}
\newcommand{\vx}{\cyvec{x}}
\newcommand{\vy}{\cyvec{y}}

%groupelements
\newcommand{\rr}{\varrho}

%operators
\DeclareMathOperator{\aut}{Aut} %example
\DeclareMathOperator{\stab}{Stab}
\DeclareMathOperator{\rk}{rk}
\DeclareMathOperator{\Hal}{H}
\DeclareMathOperator{\Hyp}{\cH}
\DeclareMathOperator{\semi}{\mathcal{S}}

\newcommand{\hypP}[1][\cP]{\Hyp(#1)}
\newcommand{\autI}{\aut_{I}}
\newcommand{\halP}[1][\cP]{\Hal(#1)}

%objects

\newcommand{\twoK}[1][\cK]{2^{#1}}
\newcommand{\dtwoK}[1][\cK]{\hat{2}^{#1}}

\begin{document}

%%
%% The title of the paper goes here.  Edit to your title.
%%

\title{Locally spherical hypertopes from generlised cubes}
\thanks{This research was supported by NSERC}
%%
%% Now edit the following to give your name and address:
%% 

\author[A. Montero]{Antonio Montero}
\address[A. Montero]{Department of Mathematics and Statistics, York University, Toronto, Ontario M3J 1P3, Canada}
\email[A. Montero]{amontero@yorku.ca}
%%
%% If there is another author uncomment and edit the following.
%%

\author[A.I. Weiss]{Asia Ivi\'c Weiss}
\address[A.I. Weiss]{Department of Mathematics and Statistics, York University, Toronto, Ontario M3J 1P3, Canada}
\email[A.I. Weiss]{weiss@mathstat.yorku.ca}

\keywords{Regularity, thin geometries, hypermaps, hypertopes, abstract polytopes}

\subjclass[2010]{Primary: 52B15, 51E24, Secondary: 51G05}

%\author- not amsart
% \author{Antonio Montero \and  Asia Ivi\'c Weiss}

% \institute{Antonio Montero \at
%               Department of Mathematics and Statistics, York University, Toronto, Ontario M3J 1P3, Canada. \\
%  							\email{amontero@yorku.ca}          %  \\
%             \emph{Present address:} of F. Author  %  if needed
%            \and
%             Asia Ivi\'c Weiss \at
%  						 Department of Mathematics and Statistics, York University, Toronto, Ontario M3J 1P3, Canada. \\
%  						 \email{weiss@mathstat.yorku.ca} 
%  }
%%
%% If there are three of more authors they are added in the obvious
%% way. 
%%
\maketitle
%%%
%%% The following is for the abstract.  The abstract is optional and
%%% if not used just delete, or comment out, the following.
%%%
\begin{abstract}
	We show that every non-degenerate regular polytope can be used to construct a thin, residually-connected, chamber-transitive incidence geometry, i.e. a regular hypertope, with a tail-triangle Coxeter diagram. 
	We discuss several interesting examples derived when this construction is applied to generalised cubes. In particular, we produce an example of a rank $5$ finite locally spherical  proper hypertope of hyperbolic type. %\todo{Changes by Asia}
\end{abstract}

% \noindent \textbf{Keywords.} list of keywords

%%
%%  LaTeX will not make the title for the paper unless told to do so.
%%  This is done by uncommenting the following.
%%

%\listoftodos\relax

\section{Introduction}

Hypertopes are  particular kind of incidence geometries that generalise the notions of abstract polytopes and of hypermaps. 
The concept was introduced in \cite{FernandesLeemansWeiss_2016_HighlySymmetricHypertopes} with particular emphasis on regular hypertopes (that is, the ones with highest degree of symmetry). 
Although in \cite{FernandesLeemansPiedadeWeiss_TwoFamiliesLocally_preprint, FernandesLeemansWeiss_ExplorationLocallySpherical_preprint, FernandesLeemansWeiss_2018_HexagonalExtensionsToroidal} a number of interesting examples had been constructed, within the theory of abstract regular polytopes much more work has been done. Notably,  \cite{Schulte_1983_ArrangingRegularIncidence} and \cite{Schulte_1985_ExtensionsRegularComplexes} deal with the universal constructions of polytopes, while in \cite{Danzer_1984_RegularIncidenceComplexes, Pellicer_2009_ExtensionsRegularPolytopes, Pellicer_2010_ExtensionsDuallyBipartite} the constructions with prescribed combinatorial conditions are explored. 
In another direction, in \cite{ CameronFernandesLeemansMixer_2017_HighestRankPolytope,  FernandesLeemans_2018_CGroupsHigh, LeemansMoerenhoutOReillyRegueiro_2017_ProjectiveLinearGroups, Pellicer_2008_CprGraphsRegular} the questions of existence of polytopes with prescribed (interesting) groups are investigated. Much of the impetus to the development of the theory of abstract polytopes, as well as the inspiration with the choice of problems, was based on work of Branko Gr\"unbaum \cite{%Gruenbaum_1977_RegularPolyhedraOld,
Gruenbaum_1978_RegularityGraphsComplexes} from 1970s.

In this paper we generalise the halving operation on polyhedra (see 7B in  \cite{McMullenSchulte_2002_AbstractRegularPolytopes}) on a certain class of regular abstract polytopes to construct regular hypertopes.  
More precisely, given a regular non-degenerate $n$-polytope $\cP$, we construct a regular hypertope $\hypP$ related to semi-regular polytopes with tail-triangle Coxeter diagram.

The paper is organised as follows.
In \cref{sec:hypertopes} we review the basic theory of hypertopes (with particular focus on regular hypertopes) and
revisit the notion of a regular polytope (first introduced in early 1980s) within the theory of hypertopes. 
In \cref{sec:halving} we explore the {halving} operation on an abstract polytope and show that the resulting incidence system is a regular hypertope.
Finally, in \cref{sec:lshGeneralisedCubes} we give concrete examples arising from our construction. 
In particular, we focus on locally spherical hypertopes arising from Danzer's construction of generalised cubes.
As a result we produce an example of a finite regular rank $5$ proper hypertope of hyperbolic type.

\section{Regular hypertopes} \label{sec:hypertopes}

%Regular hypertopes
%Incidence system
%
In this section we review the definition and basic properties of regular hypertopes. 
We introduce abstract polytopes as a special class of hypertopes. 
However, if the reader is interested in a classic and more detailed definition of abstract polytopes, we suggest \cite[Section 2A]{McMullenSchulte_2002_AbstractRegularPolytopes}.

The notion of a regular hypertope was introduced in \cite{FernandesLeemansWeiss_2016_HighlySymmetricHypertopes} as a common generalisation of abstract regular polytopes and hypermaps. 
In short, a \emph{regular hypertope} is a \emph{thin}, \emph{residually-connected}, \emph{chamber-transitive} geometry (the concepts are defined below). 
More details and an account of general theory can be found in \cite{BuekenhoutCohen_2013_DiagramGeometry}.

An \emph{incidence system} is a $4$-tuple $\Gamma:=(X,\ast,t,I)$ satisfying the following conditions: 
\begin{itemize}
 \item $X$ is the set of elements of $\Gamma$;
 \item $I$ is the set of \emph{types of $\Gamma$} (whose cardinality is called the \emph{rank of $\Gamma$});
 \item $t : X\rightarrow I$ is a type function, associating to each element $x\in X$ a type $t(x)\in I$ (an element $x$ is said to be of \emph{type} $i$, or an \emph{$i$-element}, whenever $t(x)=i$, for $i\in I$); and
 \item $\ast$ is a binary relation in $X$ called \emph{incidence}, which is reflexive, symmetric and such that, for all $x,y\in X$,
if $x\ast y$ and $t(x)=t(y)$, then $x=y$.
\end{itemize}

The \emph{incidence graph} of $\Gamma$ is a graph whose vertices are the elements of $X$ and where  two vertices are connected whenever they are incident in $\Gamma$.
The type function determines an $|I|$-partition on the set of elements of the incidence graph. 

A \emph{flag} $F$ is a subset of $X$ in which every two elements are incident. An element $x$ is incident to a flag $F$, denoted by $x\ast F$, when $x$ is incident to all elements of $F$.
For a flag $F$ the set $t(F):=\{t(x)\,|\, x\in F\}$ is called \emph{the type of $F$}. 
When $t(F)=I$, $F$ is called a \emph{chamber}.

An incidence system $\Gamma$ is a \emph{geometry} (or an \emph{incidence geometry}) if every flag of $\Gamma$ is contained in a chamber, that is, if all maximal flags of $\Gamma$ are chambers.

The \emph{residue} of a flag $F$ of an incidence geometry $\Gamma$ is the
incidence geometry $\Gamma_F := (X_F,\ast_F,t_F,I_F)$ where $X_F := \{x\in X: x\ast F , x\notin F\}$, 
$I_F:= I\setminus t(F)$, and where $t_F$ and $\ast_F$ are restrictions of $t$ and $\ast$ to $X_F$ and $I_F$ respectively.

An incidence system $\Gamma$ is  \emph{thin} when every residue of rank one of $\Gamma$ contains exactly two elements.   
If an incidence geometry is thin, then given a chamber $C$ there exists exactly one chamber differing from $C$ in its $i$-element which we denote by $C^i$. We  also say that $C$ and $C^i$ are  \emph{$i$-adjacent}. 
An incidence system $\Gamma$ is \emph{connected} if its incidence graph is connected. 
Moreover, $\Gamma$ is \emph{residually connected}  when $\Gamma$ is connected and each residue of $\Gamma$ of rank at least two is also connected. 
It is easy to see that this condition is equivalent to \emph{strong connectivity} for polytopes (as defined in \cite[p. 23]{McMullenSchulte_2002_AbstractRegularPolytopes} and reviewed below) and the thinness is equivalent to to the diamond condition for polytopes.
A \emph{hypertope} is a thin incidence geometry which is residually connected.

An \emph{abstract polytope} of rank $n$ is usually defined as a strongly-connected partially ordered set $(\cP, \leq)$ such that $\cP$ has a maximum and a minimum element. 
We also require that $\cP$ satisfies the \emph{diamond condition} and in such a way that all maximal chains of $\cP$ have the same length ($n+2$). 
In the language of incidence geometries, an abstract polytope is an incidence system $(\cP, \ast_{\leq}, \rk, \left\{ -1, \dots, n \right\} )$, where $\ast_{\leq}$ is the incidence relation defined by the order of $\cP$ (i.e., $x \ast_{\leq} y$ if and only if $x \leq y$ or $y \leq x$) and $\rk$ is the \emph{rank function}. 
We require that $\cP$ has a unique (minimum) element of rank (type) $-1$  and a unique (maximum) element of rank $n$.
Note that a flag (in the language of incidence systems) is what has been called a \emph{chain} in the theory of abstract polytopes.
Therefore, maximal chains of $\cP$ are precisely the chambers of the corresponding incidence system.
The fact that every maximal chain of $\cP$ has $(n+2)$ elements implies that $\cP$ defines a geometry.
It is well-known that for any two incident elements $F_{i} \leq F_{j}$ of $\cP$, with $\rk(F_{i}) = i$ and $\rk(F_{j}) = j$, the \emph{section}  $F_{j}/F_{i} = \{x \in \cP : F_{i} \leq x \leq F_{j}\}$ is a $(j-i-1)$-polytope.
We note that for polytopes, the residue of a chain $F$ is a union of {sections} of $\cP$ defined by the intervals of $I_{F}$.

Observe that rank $2$ hypertopes are precisely the abstract polygons and rank $3$ hypertopes are non-degenerate hypermaps.

A \emph{type-preserving automorphism} of an incidence system $\Gamma:=(X,\ast,t,I)$ is a permutation $\alpha$ of $X$ such that for every $x \in X$, $t(x) = t(x \alpha)$  and if $x,y \in X$, then $x\ast y$ if and only if $x \alpha \ast y\alpha$. 
The set of type-preserving automorphisms of $\Gamma$ is denoted by $\autI(\Gamma)$.

The group of type-preserving automorphisms of an incidence geometry $\Gamma$ generalises the automorphism group of an abstract polytope. 
Some familiar symmetry properties on polytopes extend naturally to incidence geometries. 
For instance, $\autI(\Gamma)$ acts faithfully on the set of its chambers.
Moreover, if $\Gamma$ is a hypertope this action is semi-regular. 
In fact, if $\alpha \in Aut_I(\Gamma)$ fixes a chamber $C$, it also fixes its $i$-adjacent chamber $C^i$. 
Since $\Gamma$ is residually connected, $\alpha$ must be the identity.  

We say that $\Gamma$ is \emph{chamber-transitive} if the action of $Aut_I(\Gamma)$ on the chambers is transitive, and in that case the action of $\Gamma$ on the set of chambers is regular. 
For that reason $\Gamma$ is then called a  \emph{regular hypertope}. As expected, this generalises the concept of a regular polytope.

Observe that, when $\Gamma$ is a geometry, chamber-transitivity is equivalent to \emph{flag-transitivity} (meaning that for each $J\subseteq I$, there is a unique orbit on the flags of type $J$ under the action of $Aut_I(\Gamma)$; see for example Proposition 2.2 in  \cite{FernandesLeemansWeiss_2016_HighlySymmetricHypertopes}).

Let $\Gamma:=(X,\ast,t,I)$ be a regular hypertope and let $C$  be a fixed (base) chamber of $\Gamma$. 
For each $i\in I$  there exists exactly one automorphism $\rho_i$ mapping $C$ to $C^i$.
If $F \subset C$ is a flag, then the automorphism group of the residue $\Gamma_{F}$ is precisely stabiliser of $F$ under the action of $\autI(\Gamma)$. 
We denote this group by $\stab_{\Gamma}(F)$.
It is easy to see that \[\stab_{\Gamma}(F) = \left\langle \rho_{i} : i \in I_{F} \right\rangle.  \]
If $I_{F}=\left\{ i \right\} $, that is  $\Gamma_{F}$ is of rank $|I|-1$, the thinness of $\Gamma$ imply that
\begin{equation} \label{eq:rho_involutions}
 \rho_{i}^{2} = 1.
\end{equation}
If $I_{F}=\left\{ i,j \right\} $, then there exists $p_{ij} \in \left\{ 2, \dots, \infty \right\} $ such that
\begin{equation} \label{eq:rho_Coxeter}
 \left(\rho_{i}\rho_{j}\right)^{p_{ij}} = 1.
\end{equation}
Moreover, if $F,G \subset C$ are flags such that $I_{F} = J$ and $I_{G} = K$, then 
\[\stab_{\Gamma}(F) \cap \stab_{\Gamma}(G) = \stab_{\Gamma}(F \cup G),\] or equivalently
\begin{equation}\label{eq:intCondition}
 \left\langle \rho_{j} : j \in J  \right\rangle \cap \left\langle \rho_{k} :k \in K \right\rangle = \left\langle \rho_{i} : i \in J\cap K \right\rangle.   
\end{equation}

We call the condition in \eqref{eq:intCondition} the \emph{intersection condition}.
Following \cite{FernandesLeemansWeiss_2016_HighlySymmetricHypertopes}, a C-\emph{group}  is a group generated by involutions $\left\{  \rho_{i} : i \in I  \right\}$ that satisfies the intersection condition. 
It follows that the type-preserving automorphism group of a regular hypertope is a C-group (\cite[Theorem 4.1]{FernandesLeemansWeiss_2016_HighlySymmetricHypertopes}).

Every Coxeter group $U$ is a C-group and in particular, it is the type-preserving automorphism of a regular hypertope \cite[Section 3]{Tits_1961_GroupesEtGeometries} called the \emph{universal regular hypertope} associated with the Coxeter group $U$. 
Moreover, every C-group $G$ is a quotient of a Coxeter group $U$. 
If $\cH$ is a regular hypertope whose type-preserving automorphism group is $G$, the \emph{universal cover} of $\cH$ is the regular hypertope associated with $U$. 

The \emph{Coxeter diagram} of a C-group $G$ is a graph with $|I|$ vertices corresponding to the generators of $G$ and with an edge $\{i,j\}$ whenever the order $p_{ij}$ of  $\rho_i\rho_j$ is greater than $2$. The edge is endowed with the label $p_{ij}$ when $p_{ij}>3$. 
The automorphism group of an abstract polytope  is a \emph{string C-group}, that is, a C-group having a linear Coxeter diagram. If $\cP$ is a regular $n$-polytope, then we say that $\cP$ is of (Schläfli) type $\{p_{1}, \dots,p _{n-1}\}$ whenever the Coxeter diagram of $\aut(\cP)$ is 
\[
	\begin{tikzcd}[row sep=tiny, start anchor = center, end anchor = center]
		\overset{\rho_{0}}{\bullet} \ar[r, dash, "p_{1}"' ] &%
% 		\overset{\rho_{1}}{\bullet} \ar[r, dash, "p_{2}"'] &%
		\overset{\rho_{2}}{\bullet} \ar[r, dash, "{\dots}" description] &%
		\overset{\rho_{n-2}}{\bullet} \ar[r, dash, "p_{n-1}"']&%
		\overset{\rho_{n-1}}{\bullet}
  \end{tikzcd}
\]

One of the most remarkable results in the theory of abstract regular polytopes is that the string C-groups are precisely the automorphism groups of the regular polytopes.
In other words, given a string C-group $G$, there exists a regular polytope $\cP=\cP(G)$ such that $G = \aut(\cP)$ (see \cite[Section 2E]{McMullenSchulte_2002_AbstractRegularPolytopes}). This result was first proved in \cite{Schulte_1980_RegulareInzidenzkomplexe_phdThesis, Schulte_1983_RegulareInzidenzkomplexe.Ii} for so-called \emph{regular incidence complexes}, (combinatorial objects slightly more general than abstract polytopes). See \cite[Section 8]{Schulte_2018_RegularIncidenceComplexes} for some historical notes on the subject.

Analogously, it is also possible to construct, under certain conditions, a regular hypertope from a group, and particularly from a C-group, using the following proposition.

\begin{proposition}[Tits Algorithm \cite{Tits_1961_GroupesEtGeometries}] \label{prop:titsAlgorithm}
 Let $n$ be a positive integer and $I:=\{0,\ldots,n-1\}$. 
 Let $G$ be a group together with a family of subgroups $(G_i)_{i\in I}$, $X$
 the set consisting of all cosets $G_ig$ with $g\in G$ and $i\in I$, and $t:X\rightarrow I$ defined by $t(G_ig)=i$. 
 Define an incidence relation $\ast$ on $X\times X$ by: 
 \[\begin{aligned}
 G_ig_1 &\ast G_jg_2 &&\text{ if and only if }& G_ig_1\cap G_jg_2 &\neq \emptyset .
 \end{aligned}\]
 Then the $4$-tuple $\Gamma:=(X,\ast, t,I)$ is an incidence system having $\{G_{i} : i \in I\}$ as a chamber.
 Moreover, the group $G$ acts by right multiplication as an automorphism group on $\Gamma$.
 Finally, the group $G$ is transitive on the flags of rank less than $3$.
\end{proposition}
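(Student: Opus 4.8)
The plan is to verify the four assertions in turn, treating the first two as essentially bookkeeping on cosets and reserving the real work for the flag-transitivity claim. First I would check that $\Gamma$ is an incidence system. The type function $t$ is well-defined because distinct subgroups share no common right coset: if $G_ig_1 = G_jg_2$ as subsets of $G$ and $a$ lies in this common set, then $G_ia = G_ig_1 = G_jg_2 = G_ja$, forcing $G_i = G_j$ and hence $i=j$. Reflexivity of $\ast$ is immediate since $G_ig\cap G_ig = G_ig \neq \emptyset$, and symmetry is built into the definition of set intersection. For the remaining axiom, suppose $G_ig_1 \ast G_ig_2$ with equal types; since two right cosets of the same subgroup are either disjoint or equal, the hypothesis $G_ig_1\cap G_ig_2 \neq \emptyset$ forces $G_ig_1 = G_ig_2$. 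Finally, $\{G_i : i\in I\}$ (taking $g=1$ in each type) is a chamber because the identity lies in every $G_i$, so the cosets pairwise intersect, and every type occurs exactly once.

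Next I would treat the action of $G$. For $h\in G$ define $\phi_h \colon G_ig \mapsto G_igh$. This is well-defined and bijective (with inverse $\phi_{h^{-1}}$), it preserves types by construction, and it respects incidence because right multiplication by $h$ is a bijection of $G$ commuting with intersection, so $(G_ig_1\cap G_jg_2)h = G_ig_1h \cap G_jg_2h$ shows one side is nonempty if and only if the other is. Checking the composition law then realises $G$ as a group of type-preserving automorphisms acting by right multiplication.

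The heart of the argument is transitivity on flags of rank less than $3$, which for each type set $J$ with $|J|\leq 2$ amounts to a single orbit. The rank-$0$ case (the empty flag) and the rank-$1$ case are immediate: for a fixed type $i$, right multiplication sends $G_ig_1$ to $G_ig_2$ via $h = g_1^{-1}g_2$. The decisive step is the rank-$2$ case. The key observation is that a flag $\{G_ig_1, G_jg_2\}$ with $i\neq j$ is incident precisely when the two cosets meet, and choosing any $a\in G_ig_1\cap G_jg_2$ we may rewrite the flag with a \emph{common} representative as $\{G_ia, G_ja\}$. Given a second such flag $\{G_ib, G_jb\}$, right multiplication by $h = a^{-1}b$ maps $G_ia \mapsto G_ib$ and $G_ja \mapsto G_jb$ simultaneously, establishing the desired transitivity.

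I expect the point most deserving of care to be exactly this common-representative trick, together with seeing why it does not extend to higher rank: for three or more pairwise-incident cosets the relevant intersections need not contain a single common element, so one cannot produce a uniform representative, which is precisely why the statement is confined to flags of rank less than $3$.
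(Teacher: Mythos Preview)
The paper does not supply its own proof of this proposition; it is quoted from Tits and stated without argument, so there is nothing to compare against. Your verification is correct and is the standard one. One small point deserves attention: in showing that $t$ is well-defined you conclude ``$G_i = G_j$ and hence $i=j$'', which silently assumes the subgroups $G_i$ are pairwise distinct. The usual reading of the construction takes $X$ to be the \emph{disjoint} union over $i\in I$ of the right-coset spaces of the $G_i$, so that each element carries its index and $t$ is well-defined by projection; with that convention the issue evaporates. Otherwise your proof is complete, and your closing remark on why the common-representative trick does not extend to three or more pairwise-incident cosets is exactly the right explanation for the rank-$3$ cutoff.
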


The incidence system constructed using the proposition above  will be denoted by $\Gamma(G; (G_i)_{i\in I})$ and called a \emph{coset incidence system}.

\begin{theorem}[{\cite[Theorem 4.6]{FernandesLeemansWeiss_2016_HighlySymmetricHypertopes}}]\label{thm:CGroupFlagTrans_Hypertope}
Let $I= \{0, \dots, n-1\}$, let $G=\langle \rho_i\,|\,i\in I\rangle$ be a C-group, and let $\Gamma := \Gamma(G;(G_i)_{i\in I})$ where $G_i:=\langle \rho_j\,|\,j\neq i\rangle$ for all $i\in I$.
If $G$ is flag-transitive on $\Gamma$, then $\Gamma$ is a regular hypertope.
\end{theorem}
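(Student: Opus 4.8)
The plan is to verify directly the defining properties of a regular hypertope for $\Gamma=\Gamma(G;(G_i)_{i\in I})$: namely that it is a geometry that is chamber-transitive, thin, and residually connected. The basic tool will be the extended intersection condition obtained by iterating \eqref{eq:intCondition}, namely $\bigcap_{j\in S}G_j=\langle\rho_k : k\in I\setminus S\rangle$ for every $S\subseteq I$; in particular $\bigcap_{i\in I}G_i=\{1\}$ and $\bigcap_{j\neq i}G_j=\langle\rho_i\rangle$, while taking $S=\{i\}$ and $K=I\setminus\{i\}$ gives $\langle\rho_i\rangle\cap G_i=\{1\}$, so that $\rho_i\notin G_i$. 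By \cref{prop:titsAlgorithm} the set $C:=\{G_i:i\in I\}$ is a chamber, and more generally $F_J:=\{G_j:j\in J\}$ is a flag of type $J$ for every $J\subseteq I$, since any two of these cosets share the identity. First I would show $\Gamma$ is a geometry: given any flag $F$ of type $J$, flag-transitivity provides $g\in G$ with $F=F_J g=\{G_j g:j\in J\}$, and this flag is contained in the chamber $Cg$, so every flag lies in a chamber. Chamber-transitivity is then immediate, chambers being exactly the flags of type $I$; moreover $\stab_{G}(C)=\bigcap_{i\in I}G_i=\{1\}$, so $G$ acts regularly on the chambers.

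Next I would establish thinness. Every rank-one residue is the residue of a flag of type $I\setminus\{i\}$ for some $i$, and by flag-transitivity it suffices to treat $F=F_{I\setminus\{i\}}$. The $i$-elements of $\Gamma_F$ are in bijection with the chambers containing $F$, each $i$-element $x$ corresponding to the chamber $F\cup\{x\}$. Using the regular action on chambers, a chamber $Cg$ contains $F$ if and only if $G_j g=G_j$ for all $j\neq i$, that is $g\in\bigcap_{j\neq i}G_j=\langle\rho_i\rangle=\{1,\rho_i\}$. Hence the only chambers containing $F$ are $C$ and $C\rho_i$, which are distinct because the action is regular and $\rho_i\neq 1$ (indeed $\rho_i\notin G_i\ni 1$). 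Correspondingly $\Gamma_F$ has exactly the two $i$-elements $G_i$ and $G_i\rho_i$, distinct since $\rho_i\notin G_i$, so every rank-one residue has exactly two elements.

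Finally I would treat residual connectedness. Here I would invoke the standard criterion that a coset incidence system $\Gamma(H;(H_k)_k)$ is connected precisely when $H=\langle H_k:k\rangle$ (cf.\ \cite{BuekenhoutCohen_2013_DiagramGeometry}); the proof is a short induction on the length of a word in $\bigcup_k H_k$ representing a group element, the inductive step using that all cosets through a common point are mutually incident while $H_m(sg')=H_m g'$ whenever $s\in H_m$. For $\Gamma$ itself this applies since $\langle G_i:i\in I\rangle=\langle\rho_j:j\in I\rangle=G$. For a residue of rank at least two, flag-transitivity reduces matters to the residue of a standard flag $F_J$, which is the coset incidence system $\Gamma\bigl(H;(H\cap G_k)_{k\in I\setminus J}\bigr)$ with $H=\bigcap_{j\in J}G_j=\langle\rho_k:k\in I\setminus J\rangle$, the extended intersection condition giving $H\cap G_k=\langle\rho_\ell:\ell\in I\setminus J,\ \ell\neq k\rangle$. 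Since such a residue has rank $|I\setminus J|\geq 2$, these subgroups again generate $H$, so the residue is connected. Thus $\Gamma$ is a thin, residually connected, chamber-transitive geometry, i.e.\ a regular hypertope.

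I expect the main obstacle to be the residual-connectedness step, and specifically the identification of an arbitrary residue with the coset incidence system of the appropriate parabolic subgroup, together with the verification (through the iterated intersection condition) that this subgroup is generated by its relevant maximal parabolics. By contrast, the geometry, chamber-transitivity, and thinness parts follow quite directly once the extended intersection condition and the regularity of the chamber action are in hand.
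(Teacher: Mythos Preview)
The paper does not give its own proof of this theorem; it is quoted verbatim from \cite[Theorem~4.6]{FernandesLeemansWeiss_2016_HighlySymmetricHypertopes} and used as a black box, so there is no argument in the present paper to compare yours against.

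That said, your outline is the standard route and is essentially correct. Geometry and chamber-transitivity follow immediately from flag-transitivity; thinness uses only that $\bigcap_{j\neq i}G_j=\langle\rho_i\rangle$ has order two (here you should simply say that each $\rho_i$ is an involution by the definition of a C-group, rather than deducing $\rho_i\neq 1$ from $\rho_i\notin G_i$, which is circular). Residual connectedness follows once the residue of $F_J$ is identified with the coset geometry of $H=\langle\rho_k:k\notin J\rangle$ over its maximal parabolics, since those parabolics visibly generate $H$ when $|I\setminus J|\geq 2$.

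The one point that deserves to be made explicit---and you correctly single it out---is that this identification of the residue with a parabolic coset geometry is not a consequence of the intersection condition alone: to show that every $k$-element incident to $F_J$ is represented by some $h\in H$, and that incidence between two such elements in $\Gamma$ coincides with incidence in the coset system for $H$, you must again invoke flag-transitivity (on flags of type $J\cup\{k\}$ and $J\cup\{k,\ell\}$ respectively). With that step spelled out, the argument is complete.
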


In other words, the coset incidence system $\Gamma = \Gamma(G,(G_{i})_{i \in I})$ is a regular hypertope if and only if the group $G$ is a C-group and $\Gamma$ is flag-transitive. 
In order to prove that a given group $G$ is a C-group, we can use the following result.

\begin{proposition}[{\cite[Proposition 6.1]{FernandesLeemans_2018_CGroupsHigh}}]\label{prop:IC} Let $G$ be a group generated by $n$ involutions  $\rho_0,\ldots, \rho_{n-1}$. Suppose that $G_i$ is a C-group for every $i\in\{0,\ldots,n-1\}$. Then $G$ is a C-group
if and only if $G_i \cap G_j = G_{i,j}$ for all $0\leq i,\,j \leq n-1$.
\end{proposition}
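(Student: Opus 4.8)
The plan is to establish both implications, treating the full intersection condition \eqref{eq:intCondition} for $G$ as the assertion $G_J \cap G_K = G_{J \cap K}$ for all $J, K \subseteq I$, where I abbreviate $G_J := \langle \rho_i : i \in J \rangle$ (so that $G_i = G_{I \setminus \{i\}}$ and $G_{i,j} = G_{I \setminus \{i,j\}}$). Since $G_{J \cap K} \subseteq G_J \cap G_K$ holds automatically, only the reverse inclusion requires work. The forward implication is immediate: if $G$ is a C-group, then specialising \eqref{eq:intCondition} to $J = I \setminus \{i\}$ and $K = I \setminus \{j\}$ gives $G_i \cap G_j = G_{I \setminus \{i,j\}} = G_{i,j}$, which is what is claimed.

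For the converse I would fix $J, K \subseteq I$ together with an element $g \in G_J \cap G_K$, and aim to show $g \in G_{J \cap K}$. When one of $J, K$ contains the other the claim is trivial, so I may assume both are proper subsets with neither containing the other, and then split according to whether $J \cup K = I$. If $J \cup K \neq I$, I would pick some $i \notin J \cup K$; then $J, K \subseteq I \setminus \{i\}$, so the hypothesis that $G_i$ is a C-group applies verbatim to the subsets $J$ and $K$ and yields $g \in G_J \cap G_K = G_{J \cap K}$ at once.

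The main obstacle is the remaining case $J \cup K = I$. Here I would choose $i \notin J$ and $j \notin K$; since $J \cup K = I$ forces $i \in K$ and $j \in J$, the two indices are distinct. From $J \subseteq I \setminus \{i\}$ and $K \subseteq I \setminus \{j\}$ I get $g \in G_i \cap G_j$, and this is exactly where the pairwise hypothesis enters, placing $g \in G_{i,j} = G_{I \setminus \{i,j\}}$. I would then descend in two steps. First, since $J$ and $I \setminus \{i,j\}$ both lie in $I \setminus \{i\}$, the intersection condition of the C-group $G_i$ applied to $g \in G_J \cap G_{I \setminus \{i,j\}}$ gives $g \in G_{J \setminus \{j\}}$ (using $i \notin J$). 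Second, since $J \setminus \{j\}$ and $K$ both lie in $I \setminus \{j\}$ (as $j \notin K$), the intersection condition of the C-group $G_j$ applied to $g \in G_{J \setminus \{j\}} \cap G_K$ gives $g \in G_{(J \setminus \{j\}) \cap K} = G_{J \cap K}$, the last equality holding because neither $i$ nor $j$ belongs to $J \cap K$. This closes the case and hence the proof. The crux, and the only place the hypothesis is used beyond the C-group property of the maximal parabolics, is the single descent into $G_{i,j}$ afforded by $G_i \cap G_j = G_{i,j}$; once $g$ is trapped there, the intersection conditions of $G_i$ and $G_j$ alone suffice to reduce it into $G_{J \cap K}$.
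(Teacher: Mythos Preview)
Your argument is correct. Note, however, that the present paper does not give its own proof of this proposition: it is simply quoted from \cite{FernandesLeemans_2018_CGroupsHigh} and then invoked as a tool in the proof of \cref{thm:index2Sbgp}. There is therefore nothing in this paper to compare your proof against. Your two-step descent when $J\cup K=I$ --- first using the pairwise hypothesis to land in $G_{i,j}$, then applying the intersection condition inside $G_i$ and subsequently inside $G_j$ --- is a clean and standard way to handle the only nontrivial case.
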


At the end of this section, we introduce the lemma that will be used in \cref{sec:halving} to prove our main results.

\begin{lemma}\label{lemma:CgroupsProduct}
	Let $G=\left\langle \rho_{0}, \dots, \rho_{r-1} \right\rangle $ and $H = \left\langle \rho_{r}, \dots, \rho_{r+s-1}  \right\rangle $ be two C-groups. Then the group \[G \times H = \left\langle \rho_{0}, \dots, \rho_{r-1}, \rho_{r}, \dots, \rho_{r+s-1} \right\rangle \] is a C-group.
\end{lemma}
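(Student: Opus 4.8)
The plan is to verify the intersection condition \eqref{eq:intCondition} directly for the combined generating set, exploiting the fact that the generators coming from $G$ commute with those coming from $H$ inside the direct product. Write $I = \{0, \dots, r+s-1\}$, $I_1 = \{0, \dots, r-1\}$ and $I_2 = \{r, \dots, r+s-1\}$, and identify $\rho_i$ with $(\rho_i, 1)$ for $i \in I_1$ and with $(1, \rho_i)$ for $i \in I_2$. Since $G$ and $H$ are C-groups, each is generated by involutions, so all the $\rho_i$ are involutions and $G \times H = \langle \rho_i : i \in I \rangle$ is generated by involutions; it remains only to establish \eqref{eq:intCondition}.

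First I would show that every parabolic subgroup of $G \times H$ splits as a direct product along $I_1$ and $I_2$. For $J \subseteq I$ set $J_1 = J \cap I_1$ and $J_2 = J \cap I_2$. Because the generators indexed by $I_1$ lie in $G \times \{1\}$ and those indexed by $I_2$ lie in $\{1\} \times H$, the subgroup $\langle \rho_j : j \in J \rangle$ equals $G_{J_1} \times H_{J_2}$, where $G_{J_1} := \langle \rho_j : j \in J_1 \rangle \leq G$ and $H_{J_2} := \langle \rho_j : j \in J_2 \rangle \leq H$. This is immediate once one notes that the two families of generators commute elementwise and live in complementary direct factors.

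The key computation then uses the elementary fact that in a direct product $(A \times B) \cap (C \times D) = (A \cap C) \times (B \cap D)$ for subgroups $A, C \leq G$ and $B, D \leq H$. Applying this to two parabolics $\langle \rho_j : j \in J \rangle = G_{J_1} \times H_{J_2}$ and $\langle \rho_k : k \in K \rangle = G_{K_1} \times H_{K_2}$ gives
\[
\langle \rho_j : j \in J \rangle \cap \langle \rho_k : k \in K \rangle = (G_{J_1} \cap G_{K_1}) \times (H_{J_2} \cap H_{K_2}).
\]
Now I would invoke the intersection condition in the C-groups $G$ and $H$ separately, which yields $G_{J_1} \cap G_{K_1} = G_{J_1 \cap K_1}$ and $H_{J_2} \cap H_{K_2} = H_{J_2 \cap K_2}$. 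Since $J_1 \cap K_1 = (J \cap K) \cap I_1$ and $J_2 \cap K_2 = (J \cap K) \cap I_2$, the right-hand side is exactly $G_{(J \cap K)_1} \times H_{(J \cap K)_2} = \langle \rho_i : i \in J \cap K \rangle$, which is precisely \eqref{eq:intCondition}.

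I expect the only genuinely delicate point to be the two structural facts about direct products — the splitting of parabolics and the intersection formula $(A \times B) \cap (C \times D) = (A \cap C) \times (B \cap D)$ — both standard but deserving a careful one-line justification to ensure that no element of $G_{J_1} \times H_{J_2}$ enters the intersection without lying in both factorwise intersections. Everything else reduces to bookkeeping with the disjoint index sets $I_1$ and $I_2$. Alternatively, one could argue by induction using \cref{prop:IC}, checking that each maximal parabolic is a C-group and that $G_i \cap G_j = G_{i,j}$, but the direct verification above seems cleaner and avoids the induction.
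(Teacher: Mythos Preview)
Your proof is correct and follows essentially the same route as the paper's. The paper argues element-by-element---writing $\gamma=\alpha_1\alpha_2=\beta_1\beta_2$ and using $G\cap H=\{1\}$ to force $\alpha_1=\beta_1$, $\alpha_2=\beta_2$---which is exactly the standard verification of your formula $(A\times B)\cap(C\times D)=(A\cap C)\times(B\cap D)$; after that, both proofs invoke the intersection condition in $G$ and in $H$ separately and finish with the same index-set bookkeeping.
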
 
\begin{proof}
Let $I, J \subset \left\{ 0, \dots, r+s-1 \right\} $ and let $I_{1}, J_{1} \subset \left\{ 0, \dots ,r-1 \right\} $ and $I_{2}, J_{2}\subset\left\{ r, \dots, r+s-1 \right\} $ be such that $I = I_{1} \cup I_{2}$ and $J = J_{1} \cup J_{2} $. Let $\gamma \in \left\langle \rho_{i} : i \in I \right\rangle  \cap \left\langle \rho_{j}: j \in J \right\rangle $. 
Since $\gamma \in \left\langle \rho_{i}: i \in I \right\rangle $, there exists $\alpha_{1} \in \left\langle \rho_{i} : i \in I_{1} \right\rangle $ and $\alpha_{2} \in \left\langle \rho_{i} : i \in I_{2} \right\rangle $ such that $\gamma = \alpha_{1} \alpha_{2}$.
Similarly, there exists $\beta_{1} \in \left\langle \rho_{j} : j \in J_{1} \right\rangle $ and $\beta_{2} \in \left\langle \rho_{j} : j \in J_{2} \right\rangle $ such that $\gamma = \beta_{1} \beta_{2}$. 
It follows that \[\beta_{1}^{-1} \alpha_{1} =  \beta_{2}\alpha_{2}^{-1}.\]
But $\beta_{1}^{-1} \alpha_{1} \in G$,  $\beta_{2}\alpha_{2}^{-1} \in H$,and since $G\cap H = \left\{ 1 \right\} $, it follows that $\alpha_{1} = \beta_{1}$. 
This implies \[\alpha_{1} \in \left\langle \rho_{i} : i \in I_{1} \right\rangle \cap \left\langle \rho_{j} : j \in J_{1} \right\rangle = \left\langle \rho_{k} : k \in I_{1} \cap J_{1} \right\rangle, \]
where the last equality follows from the fact that $G$ is a C-group.
Similarly, we can conclude that $\alpha_{2} \in \left\langle \rho_{k} : k \in I_{2} \cap J_{2} \right\rangle$ which implies that \[\gamma = \alpha_{1} \alpha_{2} \in \left\langle \rho_{k} : k \in \left( I_{1} \cap J_{1} \right) \cup \left( I_{2} \cap J_{2} \right) \right\rangle .\]
Finally, just observe that $I \cap J = \left( I_{1} \cap J_{1} \right) \cup \left( I_{1} \cap J_{2} \right) \cup \left( I_{2} \cap J_{1} \right) \cup \left( I_{2} \cap J_{2} \right) = \left( I_{1} \cap J_{1} \right) \cup \left( I_{2} \cap J_{2} \right)$.
\end{proof}

\section{Halving opperation}\label{sec:halving}

In this section we describe the \emph{halving operation}. 
We  apply this operation to the automorphism group of a non-degenerate regular polytope $\cP$. 
As result we obtain a group $\halP$, which is a subgroup of $\aut(\cP)$ of index at most $2$.
We prove that the group $\halP$ is a C-group and that the corresponding incidence system is flag-transitive. 
Therefore the group $\halP$ is the type-preserving automorphism group of a regular hypertope.

Let $n \geq 3$ and $\cP$ be a regular, non-degenerate $n$-polytope of type $\{p_{1}, \dots, p_{n-2}, p_{n-1}\}$ and automorphism group $\aut(\cP) = \left\langle \rr_{0}, \dots, \rr_{n-1} \right\rangle $. 
The \emph{halving operation} is the map \[\eta: \left\langle \rr_{0}, \dots, \rr_{n-1} \right\rangle \to \left\langle \rho_{0}, \dots, \rho_{n-1} \right\rangle,  \]
where 
\begin{equation}\label{eq:HalvingGroup}
		\rho_{i} = 
			\begin{cases}
				\rr_{i}, & \text{if}\ 0 \leq i \leq n-2,\\
				\rr_{n-1} \rr_{n-2} \rr_{n-1}, & \text{if}\ i = n-1,
			\end{cases}
	\end{equation}
	
The \emph{halving group} of $\cP$, denoted by $\halP$, is the  image of $\aut(\cP)$ under $\eta$.

Observe that the group $\halP = \langle \rho_{1}, \dots, \rho_{n-1} \rangle $ has the following diagram
	\begin{equation} \label{eq:diagHalving}
		 	\begin{tikzcd}[row sep=tiny, start anchor = center, end anchor = center]
		& & & & & [-0.6em] \overset{\rho_{n-2}}{\bullet} \ar[dd, dash, "s" ]\\
		\overset{\rho_{0}}{\bullet} \ar[r, dash, "p_{1}"' ] &%
		\overset{\rho_{1}}{\bullet} \ar[r, dash, "p_{2}"'] &%
		\overset{\rho_{2}}{\bullet} \ar[r, dash, "{\dots}" description] &%
		\overset{\rho_{n-4}}{\bullet} \ar[r, dash, "p_{n-3}"'] &
		\overset{\rho_{n-3}}{\bullet} \ar[ru, dash, "{p_{n-2}}" near end] \ar[rd, dash, "p_{n-2}"' near end] \\
	 & & & & & [-0.6em] \underset{\rho_{n-1}}{\bullet} 
  \end{tikzcd}
	\end{equation}
where $s = p_{n-1}$ if $p_{n-1}$ is odd, otherwise $s = \frac{p_{n-1}}{2}$.
We denote by $\hypP$ the coset incidence system $\Gamma\left( \halP, \left( H_{i} \right)_{i \in \{0, \dots, n-1\}} \right)$, where $H_{i}$ is the subgroup of $\halP$ generated by $\left\{ \rho_{j} : j \neq i \right\} $. 
In \cref{thm:index2Sbgp} we show that the group $\halP$ satisfies the intersection condition and in \cref{prop:flagTransitivity} we show that the corresponding incidence $\hypP$ is flag-transitive. 
We conclude the section with \cref{coro:HypertopesFromPolytopes} which states that $\hypP$ is in fact a thin, chamber-transitive coset geometry, i.e. a regular hypertope.

The halving operation has been used before in the context of regular polyhedra of type $\left\{ q,4 \right\} $ (see \cite{McMullenSchulte_1997_RegularPolytopesOrdinary} and  \cite[Section 7B]{McMullenSchulte_2002_AbstractRegularPolytopes}) and the resulting incidence system is a regular polyhedron of type $\left\{ q,q \right\} $. 

The operation described above doubles the fundamental region of $\aut(\cP)$ by gluing together the base flag $\Phi$ and the flag $\Phi^{n-1}$. 

As an example we explore the halving operation to the cubic tessellation  $\{4,3,4\}$. The elements of type $0$ and $1$ of the resulting incidence system are the vertices and edges of the $\{4,3,4\}$, respectively.
The elements of type $2$ are half of the cubes and the elements of type $3$ are the other half.
This is the construction of the infinite hypertope described in \cite[Example 2.5]{FernandesLeemansWeiss_ExplorationLocallySpherical_preprint} and can also be seen as a semi-regular polytope (see \cite[Section 3]{MonsonSchulte_2012_SemiregularPolytopesAmalgamated}).

It is easy to see that $\halP$ has index $2$ in $\aut(\cP)$ if and only if the set of facets of $\cP$ is bipartite. 
This is only possible if $p_{n-1}$ is even.
If this is the case, then the elements of type $i$, for $i \in \left\{ 0, \dots, n-3 \right\} $ are the faces of rank $i$ of $\cP$. The elements of type $n-2$ are half of the facets of $\cP$ (those belonging to the same partition as the base facet) and the elements of type $n-1$ are the other half of the facets, namely, those in the same partition as the facet of $\Phi^{n-1}$.

In the remainder of the section we let $\cP$ be a fixed regular $n$-polytope with a base flag $\Phi$, the automorphism group $\aut(\cP) = \left\langle \rr_{0}, \dots \rr_{n-1} \right\rangle $ and $H = \langle \rho_{0}, \dots, \rho_{n-1} \rangle$ the halving group of $\cP$. 
For $i,j \in \left\{ 0, \dots, n-1 \right\} $ we let $H_{i}$  and $H_{i,j}$ be the groups $ \left\langle \rho_{k} : k \neq i \right\rangle $ and $\left\langle \rho_{k} : k \not \in \left\{ i,j \right\}  \right\rangle $, respectively. 
Finally, by $\hypP$ we denote the incidence system $\Gamma\left( H, (H_{i})_{i \in \left\{ 0 ,\dots, n-1 \right\} } \right)$ and by $\Gamma_{i}$ the residue of $\hypP$ induced by $H_{i}$, that is $\Gamma_{i} = \Gamma\left(H_{i}, (H_{i,j})_{j \in \left\{ 0, \dots, n-1 \right\} \sm \left\{ i \right\}  }  \right)$.

\begin{theorem} \label{thm:index2Sbgp} 
Let $n \geq 3$ and $\cP$ be a regular, non-degenerate $n$-polytope of type $\{p_{1}, \dots, p_{n-1}\}$. 
Then the halving group $\halP$ is a C-group.
\end{theorem}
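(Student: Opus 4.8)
The plan is to establish the intersection condition \eqref{eq:intCondition} for the generators $\rho_{0},\dots,\rho_{n-1}$ of $\halP$ by induction on the rank $n$, using \cref{prop:IC} as the inductive engine. Two preliminary observations come first. A direct computation of the orders of the products $\rho_{i}\rho_{j}$ from \eqref{eq:HalvingGroup} together with the string relations of $\aut(\cP)$ confirms that the $\rho_{i}$ are involutions realising precisely the tail-triangle diagram \eqref{eq:diagHalving}; in particular $\rho_{n-2}\rho_{n-1}=(\rr_{n-2}\rr_{n-1})^{2}$ has order $s$, while $\rho_{n-3}\rho_{n-1}$ and $\rho_{n-3}\rho_{n-2}$ have order $p_{n-2}$. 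Second, and crucially for trimming the case analysis, conjugation by $\rr_{n-1}$ fixes each of $\rho_{0},\dots,\rho_{n-3}$ and interchanges $\rho_{n-2}$ with $\rho_{n-1}$; it is therefore an automorphism of $\aut(\cP)$ that preserves $\halP$ and induces the diagram symmetry swapping the two tail nodes.

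For the inductive step I would apply \cref{prop:IC} to $\halP$, so it remains to check that each $H_{i}$ is a C-group and that $H_{i}\cap H_{j}=H_{i,j}$ for all $i,j$. The groups $H_{i}$ are identified explicitly. The facet group $H_{n-1}=\langle \rr_{0},\dots,\rr_{n-2}\rangle$ is a string C-group, and since $\rr_{n-1}$ centralises $\rr_{0},\dots,\rr_{n-3}$ we have $H_{n-2}=\rr_{n-1}\langle \rr_{0},\dots,\rr_{n-2}\rangle\rr_{n-1}$, a conjugate of it, hence again a string C-group. The group $H_{0}=\langle \rho_{1},\dots,\rho_{n-1}\rangle$ is exactly the halving group of the (non-degenerate) vertex figure of $\cP$, so it is a C-group by the inductive hypothesis. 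Finally, for $1\le i\le n-3$ the node $\rho_{i}$ disconnects the diagram and one obtains
\[
 H_{i}=\langle \rho_{0},\dots,\rho_{i-1}\rangle\times\langle \rho_{i+1},\dots,\rho_{n-1}\rangle,
\]
whose first factor is a section group (a string C-group) and whose second factor is the halving group of a co-section (a C-group by induction); the splitting as a direct product follows because the two factors sit inside $\langle \rr_{k}:k<i\rangle$ and $\langle \rr_{k}:k>i\rangle$, which meet trivially by the intersection condition of $\aut(\cP)$. \cref{lemma:CgroupsProduct} then gives that $H_{i}$ is a C-group.

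The substance of the argument is the family of intersection conditions $H_{i}\cap H_{j}=H_{i,j}$. The guiding idea is to push each $H_{S}=\langle \rho_{k}:k\in S\rangle$ into the string C-group $\aut(\cP)$: when both $\rho_{n-2},\rho_{n-1}\in H_{S}$ one checks that $H_{S}=\langle \rr_{k}:k\in S\rangle\cap\halP$, so that the corresponding intersections collapse to intersections of string subgroups of $\aut(\cP)$ and are settled by its intersection condition. Using the diagram symmetry, the conditions for a pair $\{i,n-2\}$ are carried to those for $\{i,n-1\}$, and the latter involve only the honest string subgroup $\langle \rr_{0},\dots,\rr_{n-2}\rangle$; together with the pairs $\{i,j\}$ with $i,j\le n-3$ these are then immediate from the intersection condition of $\aut(\cP)$. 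The one $\theta$-invariant pair that is not reached in this way is $\{n-2,n-1\}$.

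This reduces the whole theorem to the single remaining identity
\[
 \langle \rr_{0},\dots,\rr_{n-2}\rangle\ \cap\ \rr_{n-1}\langle \rr_{0},\dots,\rr_{n-2}\rangle\rr_{n-1}=\langle \rr_{0},\dots,\rr_{n-3}\rangle,
\]
that is, $H_{n-2}\cap H_{n-1}=H_{n-2,n-1}$, which says that the stabilisers of the two facets of $\cP$ meeting along the base ridge intersect exactly in the stabiliser of that ridge. This is the hard core of the theorem, and the step I expect to be the main obstacle: the bare intersection condition of $\aut(\cP)$ does not suffice, since the extra generator $\rr_{n-2}$ introduced through $\rho_{n-1}=\rr_{n-1}\rr_{n-2}\rr_{n-1}$ must be eliminated. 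I would prove it by showing that an automorphism fixing both facets necessarily fixes their common ridge, and it is precisely here that the non-degeneracy of $\cP$ must be invoked (without it the identity can fail, as one sees already for flat polyhedra in rank three). Once this identity is established, \cref{prop:IC} yields that $\halP$ is a C-group, completing the induction.
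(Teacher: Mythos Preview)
Your strategy matches the paper's almost exactly: induction on $n$ via \cref{prop:IC}, identification of $H_{n-1}$ and $H_{n-2}$ as the automorphism groups of the two base facets $F_{n-1}/F_{-1}$ and $F'_{n-1}/F_{-1}$, the direct-product splitting of $H_{i}$ for $i\le n-3$ handled by \cref{lemma:CgroupsProduct} together with the inductive hypothesis, and---most importantly---the recognition that the only genuinely new intersection is $H_{n-2}\cap H_{n-1}=H_{n-2,n-1}$, which is settled by the geometric fact that two adjacent facets of a non-degenerate polytope meet in a unique ridge. That is precisely how the paper argues.

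One caution about your sketch for the remaining intersections. The blanket identity $H_{S}=\langle \rr_{k}:k\in S\rangle\cap\halP$ (for $S\ni n-2,n-1$) is not obvious: its $\supseteq$ direction already requires that the halving group of the co-face $F_{n}/F_{i}$ coincide with $\langle \rr_{i+1},\dots,\rr_{n-1}\rangle\cap\halP$, and this is not immediate without an index argument that you have not supplied (and which is delicate when $[\aut(\cP):\halP]=1$). The paper avoids this entirely by working with stabilisers instead: since each $\rho_{k}$ with $k\neq i$ fixes $F_{i}$, one has $H_{i}\subseteq\stab_{\cP}(F_{i})$; for $j\in\{n-2,n-1\}$ the intersection $H_{i}\cap H_{j}$ then lands inside the string C-group $H_{j}$ and is read off from its structure, while for $i<j\le n-3$ one splits $\gamma\in H_{i}\cap H_{j}$ along $H_{j}=H_{j}^{-}\times H_{j}^{+}$ and uses that the $H_{j}^{-}$-component must fix $F_{i}$. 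This stabiliser route is shorter and sidesteps the index issue, so you may want to replace your ``push into $\aut(\cP)$'' step with it.
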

\begin{proof}
	The strategy of this proof is to use \cref{prop:IC}.
	Let $\Phi=\left\{F_{-1}, \dots,  F_{n} \right\} $ be the base flag of $\cP$. 
	Let $F'_{n-1}$ be the facet of $\Phi^{n-1}$. 
	Observe that the groups $H_{n-1}=\left\langle \rho_{0}, \dots, \rho_{n-2} \right\rangle $ and $H_{n-2}=\left\langle \rho_{0},\dots, \rho_{n-3}, \rho_{n-1}  \right\rangle $ are the automorphism groups of the sections $F_{n-1}/F_{-1}$ and $F'_{n-1}/F_{-1} $, respectively.
 Hence, these groups are C-groups (see \cite[Proposition 2B9]{McMullenSchulte_2002_AbstractRegularPolytopes}).
	
	To prove that $H_{i}$ is a C-group for $0 \leq i \leq n-3$ we proceed by induction and use \cref{lemma:CgroupsProduct}. 
	
	If $n=3$, we need to prove that the group $H_{0} = \left\langle \rho_{1}, \rho_{2} \right\rangle = \left\langle \rr_{1}, \rr_{2} \rr_{1} \rr_{1}  \right\rangle$ is a C-group. However, this group is a subgroup of the automorphism group of the polygonal section $F_{3}/F_{1}$ isomorphic to the dihedral group $\mathbb{D}_{s}$. 
	To finish our base case we only need to show
	\begin{align}
		\left\langle \rho_{0}, \rho_{1} \right\rangle \cap \left\langle \rho_{0}, \rho_{2} \right\rangle  &= \left\langle \rho_{0} \right\rangle,\label{eq:Cgroup_rk3_0} \\ 
		\left\langle \rho_{0}, \rho_{1} \right\rangle \cap \left\langle \rho_{1}, \rho_{2} \right\rangle  &= \left\langle \rho_{1} \right\rangle, \label{eq:Cgroup_rk3_1}\\
		\left\langle \rho_{0}, \rho_{2} \right\rangle \cap \left\langle \rho_{1}, \rho_{2} \right\rangle  &= \left\langle \rho_{2} \right\rangle. \label{eq:Cgroup_rk3_2}
	\end{align}
	To prove \eqref{eq:Cgroup_rk3_0}, just observe that $\left\langle \rho_{0} \right\rangle = \stab_{\cP}(\{F_{1}, F_{2}\})$. 
	Let $\gamma \in \left\langle \rho_{0}, \rho_{1} \right\rangle \cap \left\langle \rho_{0}, \rho_{2} \right\rangle$. 
	Since $ \gamma \in \left\langle \rho_{0}, \rho_{1} \right\rangle $, $\gamma$ fixes $F_{2}$. 
	Similarly, since $\gamma \in \left\langle \rho_{0}, \rho_{2} \ \right\rangle $, $\gamma $ must fix $F'_{2}$.
	This implies that $\gamma$ fixes $F_{1}$, since this is the only $1$-face of $\cP$ incident to both $F_{2}$ and $F_{1}$. 
	Therefore, $\gamma \in \stab_{\cP}(\{F_{1}, F_{2}\}) = \left\langle \rho_{0} \right\rangle $. 
	The other inclusion is obvious.
	
	Similarly, we have that $\left\langle \rho_{0}, \rho_{1} \right\rangle \cap \left\langle \rho_{1}, \rho_{2} \right\rangle \subset \stab_{\cP}(\{F_{0}, F_{1}\})$. 
	This follows from the fact that the group $\left\langle \rho_{0}, \rho_{1} \right\rangle $ fixes $F_{2}$ and the group $\left\langle \rho_{1}, \rho_{2} \right\rangle $ fixes $F_{0}$. 
	Then, $\left\langle \rho_{0}, \rho_{1} \right\rangle \cap \left\langle \rho_{1}, \rho_{2} \right\rangle \subset \left\langle \rho_{1} \right\rangle $. 
	Again, the other inclusion is obvious. 
	The proof of \eqref{eq:Cgroup_rk3_1} follows from the same argument but now with respect to the flag $\Phi^{2}=\left\{ F_{0}, F_{1}, F'_{2} \right\} $ of $\cP$. 	This completes the base case. 

	Assume that the halving group $\halP[\cF]$ of every non-degenerate regular polytope $\cF$ of rank $r$ with $3 \leq r < n$ is a C-group. 
	Observe that if $i \in \{0, 1, \dots, n-3\}$ then $H_{i} = H_{i}^{-} \times H_{i}^{+}$ where $H_{i}^{-} = \left\langle \rho_{j} :j < i \right\rangle $ and $H_{i}^{+} = \left\langle \rho_{j} : i < j \right\rangle $. 
	Note that $H_{i}^{-}$ is just the automorphism of the section $F_{i}/F_{-1}$. If $i < n-3$ then $H_{i}^{+}$ is the halving group of the section $F_{n}/F_{i}$, which is a C-group  by the inductive hypothesis. 
	If $i= n-3$, then $H_{i}^{+}$ is isomorphic to a dihedral group $\mathbb{D}_{s}$. 
	In any case, it follows from \cref{lemma:CgroupsProduct} that $H_{i}$ is a C-group.
	
	In order to use \cref{prop:IC}, we need to prove that for every pair $i,j \in \{0, \dots, n-1\}$, with $i < j $, the equality 
	\begin{equation}\label{eq:Cgroup_rkn}
		H_{i} \cap H_{j} = H_{i,j}
	\end{equation}
 holds. 
 We proceed in a similar way as in rank $3$. 
 If $\{i,j\} = \left\{ n-1,n-2 \right\}$, then observe that $H_{i}=H_{n-2}$ fixes $F_{n-1}$ and $H_{j}=H_{n-1}$ fixes $F'_{n-1}$.
This implies that an element $\gamma \in H_{n-2} \cap H_{n-1}$ must fix $F_{n-2}$. Thus $\gamma \in \stab_{\cP}(\left\{F_{n-2}, F_{n-1} \right\} ) = \left\langle \rho_{0}, \dots, \rho_{n-3} \right\rangle=H_{i,j}$. 
The other inclusion is obvious.

If $j \in \{n-1,n-2\}$ and $i \leq n-3$ then \eqref{eq:Cgroup_rkn} follows from the fact that $H_{j}$ is a string C-group.

Assume that $0\leq i < j \leq n-3$. 
Let $\cF$ be the section $F_{j}/F_{-1} = $ of $\cP$. 
Let $\gamma \in H_{i} \cap H_{j}$. 
Observe that $H_{j} = H^{-}_{j} \times H_{j}^{+}$ and that $\aut(\cF)= H_{j}^{-}$. Let $\alpha \in H_{j}^{-}$ and $\beta \in H_{j}^{+}$ be such that $\gamma = \alpha \beta$. 
Note that $\beta$ fixes the face $F_{i}$ of $\cP$ and since $\gamma \in H_{i}$, then $\alpha$ must fix $F_{i}$. 
Since $H_{j}^{-}$ is a string C-group, it follows that $\alpha \in \left\langle \rho_{0}, \dots, \rho_{i-1}, \rho_{i+1}, \dots, \rho_{j-1} \right\rangle$. 
Then \[\gamma \in \left\langle \rho_{0}, \dots, \rho_{i-1}, \rho_{i+1}, \dots, \rho_{j-1} \right\rangle \times \left\langle \rho_{j+1}, \dots, \rho_{n-1} \right\rangle = H_{i,j}.\] 
The other inclusion is obvious.
\end{proof}

The halving group of a regular polytope $\cP$ is a C-group with Coxeter diagram \eqref{eq:diagHalving}. 
Groups generated by involutions with this diagram are called \emph{tail-triangle groups} (even when $s = 2$).
In \cite{MonsonSchulte_2012_SemiregularPolytopesAmalgamated, MonsonSchulte_2019_AssemblyProblemAlternating} Monson and Schulte show that when a tail-triangle group is a C-group, it is the automorphism group of an alternating semi-regular polytope. 
We denote by $\semi(\cP)$ the semi-regular polytope obtained by the halving operation on $\cP$.
The polytope $\semi(\cP)$ has two orbits of isomorphic regular facets, namely the right cosets of $\langle \rho_{0}, \dots, \rho_{n-3}, \rho_{n-2} \rangle$ and $\langle \rho_{0}, \dots, \rho_{n-3}, \rho_{n-1} \rangle$.
These \emph{base facets} are incident with a regular polytope $\mathcal{R}$ of rank $n-2$.
In fact, every flag of $\mathcal{R}$ can be extended to a flag of $\semi(\cP)$ in two different ways.
Moreover, any flag of $\semi(\cP)$ belongs to the orbit of one of these two flags.
The automorphisms of $\semi(\cP)$ can now be used to show flag transitivity of $\hypP$.

\begin{proposition} \label{prop:flagTransitivity}
 The incidence system $\hypP$ associated with a non-degenerate regular polytope $\cP$ is flag-transitive.
\end{proposition}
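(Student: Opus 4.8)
The plan is to deduce flag-transitivity of $\hypP$ from the geometry of the semiregular polytope $\semi(\cP)$, whose automorphism group is $H$ and which, being an \emph{alternating} semiregular polytope, carries exactly two $H$-orbits of flags, distinguished solely by the orbit of their facet. The main step will be chamber-transitivity; transitivity on flags of smaller type then follows, the flags that avoid one of the two facet-types being the easy case (they sit inside a single facet-orbit of $\semi(\cP)$, on which $H$ acts transitively, and ranks below three are already covered by \cref{prop:titsAlgorithm}).

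First I would record the dictionary between $\hypP$ and $\semi(\cP)$, which share the group $H$ and its parabolic subgroups. For $0\le i\le n-3$ the type-$i$ elements of $\hypP$ (cosets of $H_i$) are exactly the rank-$i$ faces of $\semi(\cP)$; the type-$(n-1)$ elements (cosets of $H_{n-1}$) and the type-$(n-2)$ elements (cosets of $H_{n-2}$) are precisely the facets of $\semi(\cP)$ in its first and second orbit, respectively; and the ridges of $\semi(\cP)$ are the cosets of $H_{n-2,n-1}=H_{n-1}\cap H_{n-2}$, the intersection being computed via the C-group property established in \cref{thm:index2Sbgp}.

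Next I would build an $H$-equivariant bijection between the chambers of $\hypP$ and the facet-omitting flags of $\semi(\cP)$ (the flags ending at a ridge). To a facet-omitting flag $\{c_0,\dots,c_{n-3},r\}$, with $r$ a ridge, I associate the set obtained by adjoining the two facets through $r$: the unique first-orbit facet $c_{n-1}$ and the unique second-orbit facet $c_{n-2}$ containing $r$. These exist and are unique because $\semi(\cP)$ is alternating, so each ridge lies in exactly one facet of each orbit. One checks that $\{c_0,\dots,c_{n-1}\}$ is a chamber of $\hypP$: the faces $c_0,\dots,c_{n-3}$ are comparable in $\semi(\cP)$ to both facets through $r$, hence incident in $\hypP$, while $c_{n-2}$ and $c_{n-1}$ are incident in $\hypP$ precisely because they meet along the common ridge $r$. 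This assignment is $H$-equivariant, injective and surjective onto the chambers of $\hypP$, the inverse recovering $r$ as the ridge shared by $c_{n-2}$ and $c_{n-1}$ above the chain $c_0,\dots,c_{n-3}$.

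It then suffices to show that $H$ is transitive on the facet-omitting flags of $\semi(\cP)$, and here the two-orbit structure does the work: given two such flags, I extend each to a full flag by adjoining, in both cases, a first-orbit facet (possible since every ridge meets the first orbit). The two completed flags lie in the same flag-orbit of $\semi(\cP)$, so a single element of $H$ carries one to the other; deleting the facet shows the original facet-omitting flags are $H$-equivalent. Transporting this through the bijection yields chamber-transitivity, and hence flag-transitivity of $\hypP$. The main obstacle I anticipate is exactly the coordination of the two facet-types in a chamber: making rigorous that $\hypP$-incidence between a type-$(n-2)$ and a type-$(n-1)$ element corresponds precisely to sharing a single ridge of $\semi(\cP)$, and that the alternating property guarantees each ridge meets both orbits. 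The secondary point, that chamber-transitivity upgrades to flag-transitivity, I would settle either by checking that $\hypP$ is a geometry or, more directly, by running the same facet-forgetting argument on the flags of each type $J$.
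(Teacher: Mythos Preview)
Your approach is essentially the paper's: both argue through the semi-regular polytope $\semi(\cP)$ and the transitivity of $\halP$ on its chains. The paper simply invokes Lemma~4.5b of Monson--Schulte for that transitivity, whereas you re-derive it from the two-flag-orbit structure. The one bookkeeping difference is in how flags containing both types $n-2$ and $n-1$ are handled: where you replace the two facets by their common ridge to obtain a facet-omitting flag of $\semi(\cP)$, the paper drops only the type-$(n-1)$ element, obtaining a chain $\Upsilon$ of $\semi(\cP)$ (still containing a facet), and recovers the full flag as $\Upsilon\cup\Upsilon^{n-1}$ using the $(n-1)$-adjacency coming from Monson--Schulte's setup. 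The advantage of the paper's choice is that the forgetful map $F\mapsto\Upsilon$ is automatically well-defined, so one never has to produce the ridge---precisely the step you flag as your ``main obstacle''; the equivariance $(\Upsilon\beta)^{n-1}=(\Upsilon^{n-1})\beta$ then finishes the argument. The paper also runs the argument for every type $J$ from the outset (splitting on whether $\{n-2,n-1\}\subset J$), so it never needs the separate step of upgrading chamber-transitivity to flag-transitivity.
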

\begin{proof}
 Let $J \subset \left\{ 0, \dots, n-1 \right\} $ and let $F=\{H_{i} h : i \in J\}$ for some $h \in \halP$ be a flag of $\hypP$ of type $J$.  
 If $|J \cap \{n-2,n-1\}| \leq 1$, then $F$ is a chain of $\semi(\cP)$ of type $J$. By \cite[Lemma 4.5b]{MonsonSchulte_2012_SemiregularPolytopesAmalgamated}, the group $\halP$ is transitive on the chains of this type.
 
 If $\{n-2, n-1\} \subset J$, then $F= \Upsilon \cup \Upsilon^{n-1}$, where $\Upsilon$ is the chain of $\semi(\cP)$ of type $J'=J \sm \{n-1\}$ whose faces are contained in $F$.
 Again, \cite[Lemma 4.5b]{MonsonSchulte_2012_SemiregularPolytopesAmalgamated} implies that $\halP$ is transitive in chains of type $J'$. 
 Finally, observe that if $\beta \in \halP$, then $(\Upsilon \beta)^{n-1} = (\Upsilon^{n-1})\beta$.
 It follows that $\halP$ is also transitive on flags of type $J$. 
\end{proof}

\begin{corollary}\label{coro:HypertopesFromPolytopes}
 Let $\cP$ be a non-degenerate, regular $n$-polytope and $I = \left\{ 0, \dots, n-1 \right\} $. Let $\halP$ be the halving group of $\cP$. Then the incidence system $\hypP = \Gamma\left( \halP, (H_{i})_{i \in I } \right)$ is a regular hypertope such that $\autI\left(\hypP \right)= \halP$.
\end{corollary}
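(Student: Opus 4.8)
The plan is to assemble the three results already established in this section, since the corollary is essentially their synthesis. By \cref{thm:index2Sbgp}, the halving group $\halP = \langle \rho_{0}, \dots, \rho_{n-1} \rangle$ is a C-group, and by the very definition of $\hypP$ the distinguished subgroups $H_{i} = \langle \rho_{j} : j \neq i \rangle$ are exactly those demanded in the hypothesis of \cref{thm:CGroupFlagTrans_Hypertope}. By \cref{prop:flagTransitivity}, the coset incidence system $\hypP = \Gamma\left( \halP, (H_{i})_{i \in I} \right)$ is flag-transitive. Thus both hypotheses of \cref{thm:CGroupFlagTrans_Hypertope} hold, and I would conclude at once that $\hypP$ is a regular hypertope. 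This settles the first assertion, and I would emphasise that no new combinatorial content is needed here; it has all been absorbed into the earlier statements.

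It remains to identify the type-preserving automorphism group of $\hypP$. By the Tits algorithm (\cref{prop:titsAlgorithm}), $\halP$ acts on $\hypP$ by right multiplication as a group of type-preserving automorphisms, giving a homomorphism $\halP \to \autI(\hypP)$, and the base chamber is $C = \{ H_{i} : i \in I \}$. I would argue the equality $\autI(\hypP) = \halP$ directly from the action on chambers. On one hand, flag-transitivity makes the image of $\halP$ transitive on the chambers of $\hypP$. On the other hand, since $\hypP$ is a hypertope, $\autI(\hypP)$ acts semi-regularly (freely) on chambers, as recalled at the start of this section. Combining these, for any $\alpha \in \autI(\hypP)$ I would pick $\beta \in \halP$ with $C\beta = C\alpha$; then $\alpha\beta^{-1}$ fixes $C$, so freeness forces $\alpha\beta^{-1} = \id$ and hence $\alpha = \beta \in \halP$. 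The same freeness shows the map $\halP \to \autI(\hypP)$ is injective, since any $g \in \halP$ fixing $C$ lies in $\bigcap_{i \in I} H_{i} = \{\id\}$ by the intersection condition. Therefore $\autI(\hypP) = \halP$.

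The work is front-loaded into \cref{thm:index2Sbgp} and \cref{prop:flagTransitivity}, so there is no substantial obstacle remaining. The only point that requires genuine care is the last identification: one must be sure to use both the transitivity coming from flag-transitivity of $\halP$ \emph{and} the freeness of the $\autI(\hypP)$-action that is guaranteed only because $\hypP$ is already known to be a hypertope. I would therefore carry out the steps strictly in the order above, establishing that $\hypP$ is a regular hypertope \emph{first}, so that the semi-regular action on chambers is available to upgrade the inclusion $\halP \leq \autI(\hypP)$ to an equality.
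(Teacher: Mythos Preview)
Your proposal is correct and follows exactly the intended route: the paper states the corollary without an explicit proof, treating it as the immediate combination of \cref{thm:index2Sbgp}, \cref{prop:flagTransitivity}, and \cref{thm:CGroupFlagTrans_Hypertope}. Your additional paragraph justifying $\autI(\hypP) = \halP$ via semi-regularity of the chamber action and the intersection condition is sound and in fact supplies a detail the paper leaves implicit.
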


The assumption that $\cP$ is non-degenerate is very important. When the halving operation is applied on the dual of the $4$-hemicube the resulting incidence system is not a hypertope (see \cite[Example 3.3]{FernandesLeemansWeiss_ExplorationLocallySpherical_preprint}).

Theorem 2.5 of \cite{MonsonSchulte_2019_AssemblyProblemAlternating} implies  that the semi-regular polytope $\semi(\cP)$ is regular because the associated C-group admits a group automorphism interchanging the generators $\rho_{n-1}$ and $\rho_{n-2}$ (this automorphism is given by conjugation by $\rr_{n-1}$). 
The polytope $\semi(\cP)$ is in fact isomorphic to $\cP$.   

\section{Locally spherical hypertopes from generalised cubes.} \label{sec:lshGeneralisedCubes}

A \emph{spherical hypertope} is a universal hypertope 
whose type-preserving automorphism group is finite. 
A \emph{locally spherical hypertope} is a regular hypertope whose all proper residues are spherical hypertopes.
A regular hypertope is of \emph{eucliden type} if the type-preserving automorphism group of its regular cover is an affine Coxeter group. 
Following \cite{FernandesLeemansWeiss_ExplorationLocallySpherical_preprint}, a locally spherical hypertope is of  \emph{hyperbolic type} if the type-preserving automorphism group  of its universal cover is a compact hyperbolic Coxeter group.
It is well known that compact hyperbolic Coxeter groups exist only in ranks $3$, $4$ and $5$.

In \cite{FernandesLeemansWeiss_ExplorationLocallySpherical_preprint} the authors show that  a locally spherical hypertope has to be of spherical, euclidean or hyperbolic type. 
The complete list of the Coxeter diagrams of these groups can be found in \cite[Tables 1 and 2]{FernandesLeemansWeiss_ExplorationLocallySpherical_preprint}.
Whereas the first two classes are well understood, not much is known about hyperbolic type.
In particular, the authors were not successful in producing any finite example of rank $5$ locally spherical proper hypertope of this type. 
In what follows we will use the halving operation on a certain class of polytopes first described by Danzer in \cite{Danzer_1984_RegularIncidenceComplexes} (see also \cite[Theorem 8D2]{McMullenSchulte_2002_AbstractRegularPolytopes}) and in particular we produce an example of finite rank $5$ proper regular hypertope of hyperbolic type. 

We briefly review Danzer's construction of generalised cubes.

Let $\cK$ be a regular finite non-degenerate $n$-polytope with vertex set $V=\left\{ v_{1}, \dots, v_{m} \right\} $.  
Consider the set \[2^{V} = \prod_{j=1}^{m}\left\{ 0,1 \right\} = \left\{ \vx = (x_{1}, \dots, x_{m}) : x_{j} \in \left\{ 0,1 \right\}  \right\}.  \]

Given an $i$-face $F$ of $\cK$ and $\vx \in 2^{V}$ define the sets \[F(\vx) =\left\{ \vy=(y_{1}, \dots, y_{m}) \in 2^{V} : y_{j} = x_{j} \text{ if } v_{j} \not\leq F\right\}. \] 
Then the polytope $\twoK$ is the partially ordered set \[\{\emptyset\}\cup 2^{V} \cup \left\{ F(\vx) : F \in \cK, \vx \in 2^{V} \right\} \] ordered by inclusion.
The improper face of rank $-1$ of $\twoK$ is $\emptyset$ and if $i \geq 0$ the $i$-faces of $\twoK$ are the sets $F(\vx)$ for $F$ a certain $(i-1)$-face of $\cK$ and some $\vx \in 2^{V}$.

If $\cK$ is a regular of type $\left\{ p_{1}, \dots, p_{n-2} \right\} $ then $\twoK$ is a regular polytope of type $\left\{ 4, p_{1}, \dots, p_{n-2} \right\}$. 
In fact, all the vertex figures of $\twoK$ are isomorphic to $\cK$. 
The polytope $\twoK$ is called a \emph{generalised cube} since when $\cK$ is the $(n-1)$-simplex, the polytope $\twoK$ is isomorphic to the $n$-cube.

For our purposes it is convenient to denote by $\dtwoK$  the polytope $\left(  \twoK[\cK^{\ast}]\right)^{\ast}$, so that $\dtwoK$ is a regular polytope of type $\left\{ p_{1}, \dots, p_{n-2}, 4 \right\} $ whose facets are isomorphic to $\cK$.

The automorphism group of $\dtwoK$ is isomorphic to $\bZ_{2}^{m} \rtimes \aut(\cK)$, where $m$ denotes the number of facets of $\cK$ and the action of $\aut(\cK)$ on $\bZ_{2}^{m}$ is given by permuting coordinates in the natural way.
In particular, the size of this group is $2^{m} \times |\aut(\cK)|$ (see \cite[Theorem 2C5]{McMullenSchulte_2002_AbstractRegularPolytopes} and \cite{Pellicer_2009_ExtensionsRegularPolytopes}).  

\begin{rem}
	In \cite{Pellicer_2009_ExtensionsRegularPolytopes} Pellicer generalises Danzer's construction of $\twoK$. 
	Given a finite non-degenerate regular $(n-1)$-polytope $\cK$ of type $\left\{ p_{1}, \dots, p_{n-2} \right\} $ and $s \in \bN$, Pellicer's construction gives as a result an $n$-polytope $\cP_{s}$ of type $\left\{p_{1}, \dots p_{n-2}, 2s \right\} $.
	If $s = 2$, the polytope $\cP_{2}$ is isomorphic to $\dtwoK$.
	However, when our construction is applied to the polytopes $\cP_{s}$ for $s \geq 3$, the resulting hypertopes are not locally spherical and therefore not included that in this paper.
\end{rem}

Now we discuss the locally spherical hypertopes resulting from applying halving operation to the polytopes obtained from Danzer's construction. Since $\dtwoK$ is of type $\{p_{1}, \dots p_{n-2}, 4\}$, the hypertope $\hypP[\dtwoK]$ has the following Coxeter diagram:
\[
	 	\begin{tikzcd}[row sep=tiny, start anchor = center, end anchor = center]
		& & & & & [-0.6em] \overset{\rho_{n-2}}{\bullet}\\
		\overset{\rho_{0}}{\bullet} \ar[r, dash, "p_{1}"' ] &%
		\overset{\rho_{1}}{\bullet} \ar[r, dash, "p_{2}"'] &%
		\overset{\rho_{2}}{\bullet} \ar[r, dash, "{\dots}" description] &%
		\overset{\rho_{n-4}}{\bullet} \ar[r, dash, "p_{n-3}"'] &
		\overset{\rho_{n-3}}{\bullet} \ar[ru, dash, "{p_{n-2}}" near end] \ar[rd, dash, "p_{n-2}"' near end] \\
	 & & & & & [-0.6em] \underset{\rho_{n-1}}{\bullet} 
  \end{tikzcd}
\]
We naturally extend the Schläfli symbol and say that $\hypP[\dtwoK]$ is of type $\{p_{n-1}, \dots, p_{n-3}, {}_{p_{n-2}}^{p_{n-2}}\}$.

In rank $3$ the polytope $\dtwoK[\{p\}]$ is obtained by applying the construction on a regular polygon $\{p\}$ and the induced hypertope is in fact a self-dual polyhedron of type $\left\{ p,p \right\} $. 
This polyhedron  has $2^{p-1}$ vertices, $2^{p-2}p$ edges and $2^{p-1}$ faces an it is a map on a surface of genus $2^{p-3}(p-4) + 1$. 
For $p = 3$ the resulting hypertope is a spherical polyhedron $\left\{ 3,3 \right\} $, i.e. the tetrahedron. 
When $p = 4$ the polytope $\dtwoK[\left\{ 4 \right\} ]$ is the toroid $\left\{ 4,4 \right\}_{(4,0)} $ and the induced hypertope is also of euclidean type, more precisely it is the toroid $\left\{ 4,4 \right\}_{(2,2)} $. 
 
To obtain locally spherical hypertopes in rank $4$, $\cK$ must be of type $\left\{ p,3 \right\} $ with $p=3,4,5$. The resulting hypertopes are of spherical, euclidean, and hyperbolic type, respectively.
If $p=3$, the hypertope $\hypP[\dtwoK]$ is the universal hypertope of Coxeter diagram $D_{4}$ and type $\{3, {}_{3}^{3}\}$. 
When $p=4$ the polytope $\dtwoK$ is the toroid $\left\{ 4,3,4 \right\}_{(4,0,0)} $ and $\hypP[\dtwoK]$ is a toroidal hypertope described by Ens in \cite[Theorem 4.3]{Ens_2018_Rank4Toroidal}, which we denote by $\left\{ 4,{}_{3}^{3} \right\}_{(4,0,0)} $. 
The automorphism group of this hypertope has Coxeter diagram $\tilde{B}_{3}$. 
 If $p = 5$, the resulting hypertope is of type $\left\{ 5,{}_{3}^{3} \right\} $  with automorphism group of size $2^{11} \times 120  = 245,760$. This example is different from any in \cite{FernandesLeemansWeiss_ExplorationLocallySpherical_preprint}.

In rank $5$ the polytope $\cK$ must be of type $\left\{ p, 3, 3 \right\} $ with $p = 3,4,5$, the resulting hypertopes are of shperical, Euclidean and hyperbolic type, respectively. 
If $p=3$ then the hypertopes is the universal spherical hypertope of type $\left\{ 3, 3, {}_{3}^{3} \right\} $, i.e. the universal hypertope of Coxeter diagram $D_{5}$. %This is $\left\{ 2_{1,1} \right\} $ in the notation of [\missingref[Coxeter\_1973\_RegularPolytopes]].
If $p=4$ the polytope $\dtwoK$ is the regular toroid $\left\{ 4,3,3,4 \right\}_{(4,0,0,0)} $. 
The induced hypertope is of Euclidean type, hence a toroidal hypertope which we denote by $\left\{ 4,3,{}_{3}^{3} \right\}_{(4,0,0,0)} $. 
The Coxeter diagram of its automorphism group is $\tilde{B}_{4}$.
For $p = 5$ the regular polytope $\dtwoK$ is constructed from the $120$-cell. The hypertope $\hypP[\dtwoK]$ is of type $\left\{ 5,3, {}_{3}^{3} \right\} $ and its automorphism group has size $2^{119} \times 14400$. 
It is not surprising that the authors of \cite{FernandesLeemansWeiss_ExplorationLocallySpherical_preprint}  could not find this example using a computational approach. 

For rank $n\geq 6$ we can only obtain locally spherical hypertopes from our construction if $\cK$ is the $(n-1)$-simplex $\{3^{n-2}\}$ or the $(n-1)$-cube $\{4, 3^{n-3}\}$. 
The polytope $\dtwoK$ is the $n$-cross-polytope $\{3^{n-2}, 4\}$ or the toroid $\{4, 3^{n-3}, 4\}_{(4,0, \dots 0)}$, respectivelly.
In the former case the resulting hypertope is the universal spherical hypertope of type $\{3^{n-3}, {}_{3}^{3} \}$ associated with the Coxeter diagram $D_{n}$ while in the latter it is a toroidal hypertope associated with the Coxeter diagram $\tilde{B}_{n-1}$ which we denote by $\{4, 3^{n-4}, {}_{3}^{3}\}_{(4,0 ,\dots 0)}$.

% The bibliography using bibtex
%\bibliographystyle{adamjoucc}
\bibliographystyle{plainurl}
\bibliography{danzerLSHypertopes.bib}

\end{document}